\documentclass{article}
\usepackage{graphicx}

\title{Zonoid volumes are not log-submodular}
\author{
Ruben Skorupinski \thanks{EPFL, Switzerland, \textsf{ruben.skorupinski@epfl.ch}}
}
\date{}

\usepackage[switch,mathlines]{lineno}

\usepackage[margin=0.95in]{geometry}
\usepackage{inconsolata}
\usepackage{libertine}
\usepackage[T1]{fontenc}
\usepackage[utf8]{inputenc}
\usepackage{enumerate}
\usepackage[tbtags]{amsmath}
\allowdisplaybreaks
\usepackage{amssymb,amsthm,mathtools, bbm}
\usepackage{physics}
\usepackage{hyperref}
\usepackage[svgnames]{xcolor}
\usepackage[capitalise,nameinlink]{cleveref}
\definecolor{links}{RGB}{11, 85, 255}
\definecolor{cites}{RGB}{0, 200, 0}
\definecolor{urls}{RGB}{255, 116, 0}
\hypersetup{
    colorlinks=true,
    linkcolor=black,
    citecolor=black,
    urlcolor=black
}
\usepackage[
    style=numeric, 
    natbib=true,      
    doi=false,        
    maxbibnames=99,   
    maxcitenames=2,   
    url=false,        
    isbn=false,       
    backend=biber     
]{biblatex}
\usepackage{doi}
\usepackage{tikz}
\usetikzlibrary{positioning, arrows.meta, shapes.geometric}
\usepackage{pgfplots}
\pgfplotsset{compat=1.14}
\usepgfplotslibrary{fillbetween}
\usepackage{hyphenat}
\usepackage[font=footnotesize]{caption}
\usepackage{subcaption}
\usepackage{appendix}
\usepackage{thm-restate}
\usepackage{nicefrac}
\usepackage{tcolorbox}
\usepackage{bm}

\usepackage{todonotes}

\usepackage{algorithm}
\usepackage[noend]{algpseudocode}
        
\newcommand{\conv}{\textup{conv}}

\newcommand{\R}{\mathbb{R}}

\newcommand{\indep}{\perp\kern-5pt\perp}

\theoremstyle{theorem}

\newtheorem{theorem}{Theorem}[section]
\newtheorem*{theorem*}{Theorem}

\newtheorem{proposition}[theorem]{Proposition}
\newtheorem{corollary}[theorem]{Corollary}

\theoremstyle{remark}

\crefformat{equation}{#2(#1)#3}
\Crefformat{equation}{#2(#1)#3}

\begin{document}

\maketitle

\begin{abstract}   
    We disprove the conjecture that volume is log-submodular under Minkowski addition on the class of zonoids by exhibiting a four-dimensional zonotope $A$ and two segments $B$ and $C$ such that
    \begin{align*}
        |A||A+B+C|>|A+B||A+C|,
    \end{align*}
    where $|\cdot|$ denotes the volume. Consequently, several related local mixed-volume, local Loomis-Whitney, projection-volume ratio, and volume-to-surface-area conjectures also fail for zonoids. The zonotope in our counterexample is generated by a 2-modular matrix. We also provide a proof of the correctness of the conjecture in the case where $A+B+C$ is a unimodular zonotope as well as a characterization of the equality cases in this setting.
\end{abstract}

\section{Introduction}

A \emph{zonotope} $Z(V)$ generated by a matrix $V=(v_1,...,v_n) \in \R^{d \times n}$ is defined as the Minkowski sum of the line segments $\conv(0,v_i)= [0,v_i]$, that is, $Z(V) = \{\sum_i\lambda_i v_i: \lambda_i \in [0,1]\}$. Zonotopes and their Hausdorff limits, called \emph{zonoids}, have been studied extensively from both geometric and combinatorial viewpoints. A conjecture originally stated in \citet[Conjecture 4.16]{fradelizi2024sumset} proposes that for any three zonoids $A,B,C \subseteq \R^d$ it holds that 
\begin{align}\label{conj:log_sub}
    |A||A+B+C| \leq |A+B||A+C| 
\end{align}
It is shown in \cite{fradelizi2024volume} that the full conjecture \cref{conj:log_sub} is equivalent to the case above where $A$ is a zonotope and $B$ and $C$ are segments. In \cite{fradelizi2024sumset}, it is shown that this log-submodularity property of the volume holds if $A$ is an ellipsoid (in this case $B$ is even allowed to be any convex body) and that it fails for general convex bodies $A,B,C$. In a subsequent work \citep{fradelizi2024volume} the conjecture is discussed further and in particular is shown to hold for zonoids in dimensions 2 and 3. Furthermore, it is shown that the conjecture is equivalent to several other natural geometric inequalities which hence hold for 2- and 3-dimensional zonoids. To the best of our knowledge, the conjecture remained open for $d \geq 4$; it was explicitly treated as such in recent subsequent works \citep{fradelizi2026weighted, manui2025volume}.

\subsection{Results of this paper}

The main result (\Cref{thm:main_thm}) of this paper is a counterexample to the conjecture \cref{conj:log_sub} in $\R^4$. Indeed, taking $A= Z(V)$, $B= [0,w]$ and $C=[0,w']$, where
\begin{align*}
    V = \begin{pmatrix}
    1& 0& 0&  0&  0& 1\\
    0& 1& 0&  1&  0&-1\\
    0& 0& 1&  0&  1&-1\\
    0& 0& 0& -1& -1& 0\\
    \end{pmatrix}, 
    \quad w = \begin{pmatrix}
    1\\
    0\\
    0\\
    -1\\
    \end{pmatrix},
    \quad w'= \begin{pmatrix}
    0\\
    1\\
    -1\\
    0\\
    \end{pmatrix},
\end{align*}
results in $|A||A+B+C| >|A+B||A+C|$, disproving the conjecture for zonoids. Due to the other implications and equivalences shown in \citet{fradelizi2024volume} this counterexample also disproves additional conjectures for the class of zonoids including a local Loomis-Whitney inequality, a conjecture on the monotonicity of the projection volume of zonoids, and one on the monotonicity of volume-to-surface area. The implications are listed in more detail in \Cref{cor:main_cor}.
\par

A second result of the paper - and also the reason the example was found - is a proof of the conjecture \cref{conj:log_sub} when the sum of the participating zonotopes is unimodular. Unimodular zonotopes, i.e., zonotopes generated by a matrix $V$ whose $(d\times d)$-subdeterminants are all in $\{-1,0,1\}$, have already proved useful in a previous work \cite{eisenbrand2026nearly} where the structural properties of such zonotopes were used to prove a tight sparsification result for their generator sets. For a unimodular zonotope $Z(V)$ the volume can be written as $|Z(V)| = \det(VV^T)$ and the matrix determinant lemma can be applied to prove the log-submodularity of the volume and also characterize the equality cases. 

\paragraph{How the counterexample was found}
We first proved that the conjecture holds for zonotopes and two segments $w,w'$ if the matrix $(V|w|w')$ is unimodular. Next, we inspected 2-modular matrices. These matrices have the property that all their $(d\times d)$-sub-determinants lie in the set $\{-2,-1,0,1,2\}$ and were studied recently in \citet{oxley20222}. Chat-GPT was then used to search for a counterexample within the space of the 2-modular matrices which led to the discovery of the counterexample within a specific class of 2-modular matrices defined in \cite{oxley20222}. We remark that 2-modular matrices are highly structured (see e.g. \cite{artmann2017strongly, averkov2024maximal, dadush2026excluding, lee2023polynomial}), for example, a 2-modular matrix with $d$ rows can only have $O(d^2)$ unique columns. It thus seems plausible to computationally search the space of 2-modular matrices without relying on specific examples of the literature.

\section{Unimodular zonotopes}

In this section we provide a short proof that the volume of unimodular zonotopes is log-submodular when adding generators $w,w'$ that keep the zonotope unimodular. Unimodular zonotopes are a natural generalization of the class of graphical zonotopes\footnote{Given a connected graph $G=(V,E)$ define the generators $v_{\{i,j\}} = e_i-e_j$ for every edge $\{i,j\} \in E$. Note that these zonotopes are inherently $(d-1)$-dimensional since every generator of the zonotope is orthogonal to the all-1-vector.}, for which it is already known that their volume behaves in a log-submodular way. One can check that the $(d-1)$-dimensional volume of graphical zonotopes is exactly $\sqrt{d}$ times the number of spanning trees in the underlying graph, and the fact that this quantity behaves log-submodularly as a function of the edge set was shown in \citep{khosoussi2019reliable}.
\par
The main reason that Conjecture \cref{conj:log_sub} holds in the class of unimodular zonotopes comes down to the sub-determinants lying in $\{-1,0,1\}$. Because of this, the absolute value of the determinant of a submatrix $V_B = (v_i)_{i\in B}$ is equal to the square of the determinant. With this at hand, we can write the volume of a unimodular zonotope as the sum of squares of sub-determinants which behaves more benignly than the usual sum of absolute values. The ideas of the proof were inspired by \cite{eisenbrand2026nearly}.

\begin{proposition} \label{prop:unimodular_zonos}
    Let $Z$ be a zonotope generated by a rank-$d$ matrix $V \in \R^{d \times n}$ and let $w$, $w'$ be additional generators such that the matrix $(V|w|w')$ is unimodular. Then $|Z||Z+[0,w]+[0,w']| \leq |Z+[0,w]||Z+[0,w']|$. Moreover, equality holds if and only if $w\perp (VV^T)^{-1}w'$.
\end{proposition}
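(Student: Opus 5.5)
The plan is to express every volume in the statement as a Gram determinant and then apply the matrix determinant lemma. For a zonotope $Z(U)$ with $U\in\R^{d\times m}$, the standard formula gives $|Z(U)| = \sum_{B} |\det U_B|$, where the sum runs over all $d$-element column subsets $B$. If $U$ is unimodular, each $|\det U_B|\in\{0,1\}$ equals $(\det U_B)^2$. By Cauchy--Binet we then get $|Z(U)| = \sum_B (\det U_B)^2 = \det(UU^T)$.

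Every column submatrix of the unimodular matrix $(V|w|w')$ is again unimodular. Here we use that $V$ has rank $d$, so all four matrices $V$, $(V|w)$, $(V|w')$, $(V|w|w')$ have full row rank and their $d\times d$ minors are minors of the big matrix. Writing $M = VV^T$, which is positive definite because $V$ has rank $d$, we obtain
\begin{align*}
|Z| = \det M,\quad |Z+[0,w]| = \det(M+ww^T),\quad |Z+[0,w']| = \det(M+w'w'^T),\quad |Z+[0,w]+[0,w']| = \det(M+ww^T+w'w'^T).
\end{align*}

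Next apply the matrix determinant lemma. Set $a = w^TM^{-1}w\ge 0$, $b = w'^TM^{-1}w'\ge 0$ and $c = w^TM^{-1}w'$. Then $\det(M+ww^T) = \det M\,(1+a)$ and $\det(M+w'w'^T)=\det M\,(1+b)$. For the rank-two update,
\begin{align*}
\det(M+ww^T+w'w'^T) = \det M \cdot \det\begin{pmatrix} 1+a & c\\ c & 1+b\end{pmatrix} = \det M\,\bigl((1+a)(1+b)-c^2\bigr).
\end{align*}
This follows from $\det(M+WW^T)=\det M\det(I_2+W^TM^{-1}W)$ with $W=(w|w')$, or from applying the rank-one lemma twice together with Sherman--Morrison.

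Dividing the claimed inequality by $(\det M)^2>0$ reduces it to $(1+a)(1+b)-c^2\le (1+a)(1+b)$, i.e.\ $c^2\ge 0$, which is trivial. Equality holds iff $c = w^T(VV^T)^{-1}w' = 0$, i.e.\ $w\perp (VV^T)^{-1}w'$.

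The one step needing care is the first: justifying that all four matrices inherit unimodularity and full rank, so that the volume formula $|Z(U)|=\det(UU^T)$ applies to each of them. Everything after that is algebra.
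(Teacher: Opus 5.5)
Your proof is correct and follows the paper's argument: you write each volume as a Gram determinant $\det(UU^T)$ using unimodularity and Cauchy--Binet, then apply the matrix determinant lemma. The only cosmetic difference is that you add both generators at once with the rank-two lemma, which gives $(1+a)(1+b)-c^2$ directly. The paper instead applies the rank-one lemma to $Z(V|w')$ and compares $w^T(M+w'w'^T)^{-1}w$ with $w^TM^{-1}w$ via Sherman--Morrison, which is the alternative route you mention yourself.
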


\begin{proof}
    Since all $(d\times d)$-sub-determinants $\det(V_B)$ of the unimodular matrix $V=(v_1,...,v_n)$ are in $\{-1,0,1\}$ the volume of the zonotope generated by $V$ is given by 
    \begin{align*}
        |Z| = \sum_{|B|=d} | \det(V_B)|  =  \sum_{|B|=d}\det(V_B)^2
    \end{align*} 
    By Cauchy-Binet we can thus write $|Z| = \det({\sum_i v_iv_i^T}) = \det(VV^T)$ and therefore, using the matrix-determinant lemma, we deduce that for an additional generator $w$ we get
    \begin{align*}
        |Z(V|w)|= \det\left( \sum_{i \in [n]}v_i v_i^T + ww^T \right) = \det\left( \sum_{i \in [n]}v_i v_i^T\right) \left(1 + w^T \left(\sum_{i \in [n]}v_i v_i^T\right)^{-1} w  \right).
    \end{align*}
    Note that the final expression is $ |Z(V)|(1+w^T(VV^T)^{-1}w)$ and hence the multiplicative increase in volume from adding the generator $w$ to $Z(V)$ is exactly the factor $1+w^T(VV^T)^{-1}w$. Similarly, when adding the same generator $w$ to the zonotope $Z(V|w')$ the relative increase is given by $1 + w^T(VV^T + w'w'^T)^{-1}w$. Finally, applying the Sherman–Morrison formula to $VV^T + w'w'^T$ and temporarily defining $M = VV^T$ for readability yields
    
    \begin{align*}
        &w^T(M + w'w'^T)^{-1}w \leq w^TM^{-1}w  \iff w^T\left(M^{-1}- \frac{M^{-1}w'w'^TM^{-1}}{1 + w'^TM^{-1}w'}\right)w \leq w^TM^{-1}w\\
        &\iff w^T\frac{M^{-1}w'w'^TM^{-1}}{1 + w'^TM^{-1}w'}w \geq 0 \iff (w'^TM^{-1}w)^2 \geq 0.
    \end{align*}

    Since the last inequality is true, with equality if and only if $w'^TM^{-1}w = 0$, the result follows.
\end{proof}

\section{2-modular zonotopes}


We will now present the 4-dimensional counterexample that rules out general log-submodularity of zonoid volumes in dimensions $d \geq 4$. For this we will make use of a class of matrices defined in the paper ``2-modular matrices'' by Oxley and Walsh \cite{oxley20222}. Staying close to their notation, we define $I_d = (e_1,...,e_d)$ to be the $d$-dimensional identity matrix, $D_d$ the matrix consisting of the columns $d_{ij} = e_i-e_j$ for every $1 \leq i <j \leq d$ and $h = d_{12}-e_3$. The matrix we will consider is a submatrix of $H_d := (I_d|D_d|h)$. Note that $(I_d|D_d)$ is totally unimodular and hence, using the multilinearity of the determinant to split up $h$ into $e_3$ and $d_{12}$ we find that all $(d \times d)$ sub-determinants of $H_d$ lie in the set $\{-2,-1,0,1,2\}$, i.e., $H_d$ is 2-modular.

\par

Now, consider the submatrix $V$ of $H_4$ given by 
\begin{align*}
    V = (e_1,e_2,e_3, d_{24}, d_{34},h)= \begin{pmatrix}
    1& 0& 0&  0&  0& 1\\
    0& 1& 0&  1&  0&-1\\
    0& 0& 1&  0&  1&-1\\
    0& 0& 0& -1& -1& 0\\
    \end{pmatrix}, 
    \quad w =d_{14} = \begin{pmatrix}
    1\\
    0\\
    0\\
    -1\\
    \end{pmatrix},
    \quad w' =d_{23} = \begin{pmatrix}
    0\\
    1\\
    -1\\
    0\\
    \end{pmatrix}.
\end{align*}

It is worth observing that if we removed the special column $h$ from the matrix $V$ to get $\Tilde{V}$, we would be back in the unimodular case. Moreover, one can check that $w^T(\Tilde{V}\Tilde{V}^T)^{-1}w' = 0$ and hence we recover the equality case of \Cref{prop:unimodular_zonos}, that is, $|Z(\Tilde{V})||Z(\Tilde{V}|w|w')| = |Z(\Tilde{V}|w)||Z(\Tilde{V}|w')|$. Appending the vector $h$ to all matrices increases the left-hand side of the equation more than it increases the right-hand side, and hence, computing the volumes of the four zonotopes yields

\begin{align*}
    |Z(V)||Z({V}|w|w')| = 14 \cdot 56 = 784 > 780 = 30 \cdot 26 = |Z({V}|w)||Z({V}|w')|.
\end{align*}

For completeness, let $N_j(M)$ denote the number of $(4 \times 4)$-sub-determinants of a matrix $M$ whose absolute value is $j$ for $j \in \{0,1,2\}$. The table below gives the counts of the three types of sub-determinants leading to the final volume inequality.

\begin{align*}
    \begin{array}{c|ccc|c}
    M
    & N_0(M)
    & N_1(M)
    & N_2(M)
    & |Z(M)|=N_1(M)+2N_2(M)
    \\ \hline
    V
    & 2  & 12 & 1 & 14
    \\
    (V|w)
    & 6  & 28 & 1 & 30
    \\
    (V|w')
    & 12 & 20 & 3 & 26
    \\
    (V|w| w')
    & 18 & 48 & 4 & 56
    \end{array}
\end{align*}
\\

The 4-dimensional counterexample can easily be lifted to work in any dimension $d \geq 4$. Indeed, by considering $\Hat{V}=\text{diag}(V,I_k)$ and $\Hat{w}=(w,0),\Hat{w}' =(w',0)$ each of the resulting zonotopes is the Cartesian product of the corresponding 4-dimensional zonotope with a $k$-dimensional cube $[0,1]^k$, while the combined generator matrix remains 2-modular. With this we therefore get the main theorem.

\begin{theorem} \label{thm:main_thm}
    For every $d \geq 4$ there exists a 2-modular matrix $(V|w|w')$ such that 
    \begin{align*}
        |Z(V)||Z({V}|w|w') > |Z({V}|w)||Z({V}|w')|.
    \end{align*}
\end{theorem}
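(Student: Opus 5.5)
The plan is to prove the statement in two stages: first the case $d=4$ with the explicit matrices $V$, $w=d_{14}$, $w'=d_{23}$ given above, and then lift to all $d\ge 4$ by a product construction.

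\emph{Step 1: 2-modularity.} Since $(V|w|w')$ is a column submatrix of $H_4$, every $(4\times 4)$-subdeterminant of it is a subdeterminant of $H_4$. Such minors lie in $\{-2,\dots,2\}$: if the minor does not use $h$, it is a minor of the totally unimodular matrix $(I_4|D_4)$. If it uses $h=d_{12}-e_3$, multilinearity splits it into two minors of $(I_4|D_4)$, each in $\{-1,0,1\}$.

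\emph{Step 2: volumes.} We use the standard formula $|Z(M)|=\sum_{|B|=4}|\det M_B|$. For 2-modular $M$ this equals $N_1(M)+2N_2(M)$. The task is then to enumerate the $\binom{6}{4}=15$, $\binom{7}{4}=35$, $\binom{7}{4}=35$ and $\binom{8}{4}=70$ minors and confirm the counts in the table, giving volumes $14,30,26,56$ and hence $14\cdot 56=784>780=30\cdot 26$.

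To organize the enumeration, I would use the splitting $h=d_{12}-e_3$. Every minor containing $h$ becomes a difference of two totally unimodular minors. Minors not containing $h$ are $\pm1$ exactly when the corresponding columns form a basis of the graphic/unimodular structure; for $(I_4|D_4)$ this means spanning trees of $K_5$ with vertex $0$ representing the $e_i$. That makes the zero/nonzero status checkable by a forest criterion rather than by computing determinants directly. This bookkeeping is the main obstacle: it is routine but error-prone, and it is the step I would double-check, for instance by a computer calculation, particularly for the $70$ minors of $(V|w|w')$.

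\emph{Step 3: lifting to $d=4+k$.} Set $\hat V=\mathrm{diag}(V,I_k)$, $\hat w=(w,0)$, $\hat w'=(w',0)$. Each zonotope $Z(\hat V|\cdot)$ is then $Z(V|\cdot)\times[0,1]^k$, so every volume is unchanged and the strict inequality persists.

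2-modularity of the lifted matrix follows from the block structure. A $d\times d$ minor is nonzero only if it contains all $k$ unit columns $e_{5},\dots,e_{d}$, because these are the only columns with nonzero entries in the last $k$ rows. Expanding along those columns reduces it to a $4\times4$ minor of $(V|w|w')$, which lies in $\{-2,\dots,2\}$ by Step 1.
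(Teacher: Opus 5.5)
Your proposal is correct and follows the paper's argument: 2-modularity comes from splitting $h=d_{12}-e_3$ over the totally unimodular $(I_4|D_4)$, the volumes $14,30,26,56$ come from counting minors with $|Z(M)|=N_1(M)+2N_2(M)$, and the lift $\hat V=\mathrm{diag}(V,I_k)$ multiplies each zonotope by a unit cube. The enumeration you defer does check out, and your spanning-tree criterion for the $h$-free minors is a convenient way to carry it out; your block-structure argument for 2-modularity of the lifted matrix also supplies a detail the paper leaves implicit.
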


Due to the equivalences and implications shown in \citet[Theorem 3.6, 3.9, and 5.3]{fradelizi2024volume} the following conjectures also fail within the class of zonoids.

\begin{corollary}\label{cor:main_cor}
    Denote by $|\cdot |_k$ the $k$-dimensional volume, by $P_{L}A$ the orthogonal projection of a $A$ onto the linear subspace $L$, by $\partial A$ the boundary of $A$ and by $V(\cdot)$ the mixed volume as defined in \citep{fradelizi2024volume}, where $A[j]$ denotes $j$ repeated copies of $A$. Then, each of the following conjectures is \emph{false} in every dimension $d\geq 4$.
    \begin{itemize}
        \item A local mixed-volume inequality: Let $A, B, C \subseteq \R^d$ be zonoids, then
           \[
            |A|\,V(A[d-2],B,C)
            \leq
            \frac{d}{d-1}
            V(A[d-1],B)\,V(A[d-1],C).
            \]
        \item A local Loomis-Whitney inequality: Let $A \subseteq \R^d$ be a zonoid and $u,v \in S^{d-1}$ linearly independent directions, then
            \[
            |A|\,
            \bigl|P_{\operatorname{span}\{u,v\}^{\perp}}A\bigr|_{d-2}
            \sqrt{1-\langle u,v\rangle^2}
            \leq
            \bigl|P_{u^\perp}A\bigr|_{d-1}
            \bigl|P_{v^\perp}A\bigr|_{d-1}.
            \]
        \item Monotonicity of the projection-volume ratio: Let $A, B \subseteq \R^d$ be zonoids, where $A$ is full-dimensional, and $u \in S^{d-1}$ a direction, then
        \[
            \frac{|A+B|}
                 {\bigl|P_{u^\perp}(A+B)\bigr|_{d-1}}
            \geq
            \frac{|A|}
                 {\bigl|P_{u^\perp}A\bigr|_{d-1}}.
        \]
    
        \item Monotonicity of the volume-to-surface-area ratio: Let $A, B \subseteq \R^d$ be zonoids, where $A$ is full-dimensional, then
        \[
            \frac{|A+B|}{|\partial(A+B)|_{d-1}}
            \geq
            \frac{|A|}{|\partial A|_{d-1}}.
        \]
    \end{itemize}
\end{corollary}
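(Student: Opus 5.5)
The statement to prove is the Corollary: each listed conjecture fails for zonoids in every $d \ge 4$. The plan is to combine \Cref{thm:main_thm} with the implications of \citet[Theorems 3.6, 3.9, 5.3]{fradelizi2024volume}, arguing by contraposition. For each of the four inequalities I will recall that, if it held for all zonoids in $\R^d$, then \cref{conj:log_sub} would hold for all zonoids in $\R^d$. It would therefore hold in particular for $A=Z(V)$, $B=[0,w]$, $C=[0,w']$ with the $d$-dimensional lift $\hat V=\mathrm{diag}(V,I_k)$, $\hat w=(w,0)$, $\hat w'=(w',0)$. This contradicts \Cref{thm:main_thm}.

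The key steps, in order, are as follows. First, the local mixed-volume inequality. I will use that $t\mapsto |A+tB+sC|$ is a polynomial whose mixed-derivative terms are the mixed volumes $V(A[d-1],B)$ and $V(A[d-2],B,C)$. By the equivalence in \citet{fradelizi2024volume}, log-submodularity for zonoids is equivalent to the infinitesimal version at $s=t=0$, which is exactly $|A|V(A[d-2],B,C)\le\frac{d}{d-1}V(A[d-1],B)V(A[d-1],C)$. The reduction from the global to the local statement uses the fact that $A+tB$ is again a zonoid, so the local inequality can be integrated along $t,s$. Second, the local Loomis--Whitney inequality. For segments $B=[0,u]$ and $C=[0,v]$ one has $dV(A[d-1],[0,u])=|P_{u^\perp}A|_{d-1}$ and $d(d-1)V(A[d-2],[0,u],[0,v])=\sqrt{1-\langle u,v\rangle^2}\,|P_{\mathrm{span}\{u,v\}^\perp}A|_{d-2}$ for unit $u,v$. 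Substituting these identities turns the mixed-volume inequality with segment $B,C$ into the Loomis--Whitney form, so the two are equivalent, since segments suffice by the reduction cited in the introduction. Third, the projection-volume ratio. Monotonicity of $|A+B|/|P_{u^\perp}(A+B)|$ applied with $B=[0,w]$ and $u=w'/|w'|$ gives $|A+[0,w]|/|A|\ge |P_{u^\perp}(A+[0,w])|/|P_{u^\perp}A|$. Using $|A+[0,w']|=|A|+|w'|\,|P_{u^\perp}A|$, this rearranges to \cref{conj:log_sub} for two segments. Fourth, the volume-to-surface-area ratio. Cauchy's surface area formula writes $|\partial A|$ as an average of projections $|P_{u^\perp}A|$, and \citet[Theorem 5.3]{fradelizi2024volume} shows that this monotonicity for zonoids implies the projection version. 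I will quote that implication rather than reprove it.

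Finally, dimension $d\ge4$ is covered because \Cref{thm:main_thm} supplies a counterexample in each such $d$. Each of the cited implications is dimension-preserving.

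The main obstacle is the surface-area implication. Surface area is an average over directions, so a counterexample for a single direction does not transfer directly, and the original argument presumably uses a limiting or rotation-averaging construction among zonoids. If I had to prove it myself, I would first try to localize: replace $A$ by $A+\lambda K$ for a zonoid $K$ concentrated near a single direction, and use the fact that zonoids form a cone closed under these operations. For this paper, however, I would simply invoke the cited theorem.
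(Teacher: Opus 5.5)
Your proposal is correct and follows the paper's route. The paper's proof consists exactly of this contrapositive combination of \Cref{thm:main_thm} (lifted to every $d\ge 4$) with the equivalences of Fradelizi et al. Your sketches of the first three reductions are also accurate: the $ts$-coefficient computation giving $\frac{d}{d-1}$, the segment identities for the mixed volumes, and $|A+[0,w']|=|A|+|w'|\,|P_{u^\perp}A|_{d-1}$.

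For the surface-area case, the natural reduction is not Minkowski-adding a zonoid to $A$. Instead, apply the inequality to the linear images $TA,TB$, which are still zonoids, and use
\[
\frac{|TK|}{|\partial(TK)|_{d-1}}=\frac{|K|}{d\,V\bigl(K[d-1],T^{-1}B_2^d\bigr)}.
\]
Then let the ellipsoid $T^{-1}B_2^d$ degenerate to $[-u,u]$: this recovers the projection version, and the strict counterexample survives small $\epsilon$.
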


\section{Acknowledgments}
GPT-5.6 Pro was used during the development of this paper to aid with literature searches and with exploratory volume computations of 2-modular zonotopes. All computations were independently verified by the author.

{\small
\printbibliography
}

\end{document}